\newtheorem{thm}{Theorem}
\newtheorem{lem}[thm]{Lemma}
\theoremstyle{definition}
\newtheorem{ex}[thm]{Example}
\providecommand{\norm}[1]{\lVert#1\rVert}
\begin{document}

\title{\bf Finitely additive equivalent martingale measures}

\author{\textbf{Patrizia Berti}\\
Dipartimento di Matematica Pura ed Applicata ``G. Vitali''\\ Universit\`{a} di Modena e Reggio-Emilia \\ via Campi 213/B, 41100 Modena, Italy \\
e-mail: berti.patrizia@unimore.it \and
\textbf{Luca Pratelli}\\
 Accademia Navale, viale Italia 72, 57100 Livorno,
Italy\\
e-mail: pratel@mail.dm.unipi.it\and
\textbf{Pietro Rigo}\\
Dipartimento di Economia Politica e Metodi Quantitativi\\ Universit\`{a} di Pavia\\ via S. Felice 5, 27100 Pavia, Italy\\
e-mail: prigo@eco.unipv.it}

\maketitle

\begin{abstract}\noindent Let $L$ be a
linear space of real bounded random variables on the probability space $(\Omega,\mathcal{A},P_0)$. There is a finitely additive probability $P$ on $\mathcal{A}$, such that $P\sim P_0$ and $E_P(X)=0$ for all $X\in L$, if and only if $c\,E_Q(X)\leq\text{ess sup}(-X)$, $X\in L$, for some constant $c>0$ and (countably additive) probability $Q$ on $\mathcal{A}$ such that $Q\sim P_0$. A necessary condition for such a $P$ to exist is $\overline{L-L_\infty^+}\,\cap L_\infty^+=\{0\}$, where the closure is in the norm-topology. If $P_0$ is atomic, the condition is sufficient as well. In addition, there is a finitely additive probability $P$ on $\mathcal{A}$, such that $P\ll P_0$ and $E_P(X)=0$ for all $X\in L$, if and only if $\text{ess sup}(X)\geq 0$ for all $X\in L$.
\end{abstract}

\begin{quote}
{\footnotesize \noindent AMS 2010 {\em Subject Classification.} 60A05, 60A10, 28C05, 91B25, 91G10.
\\ {\em Key words and phrases.} Arbitrage, de Finetti's coherence principle, equivalent
martingale measure, finitely additive probability,
fundamental theorem of asset pricing.}
\end{quote}

\bigskip

\section{Introduction}\label{intro}

Throughout, $L$ is a
linear space of real bounded random variables on the probability space $(\Omega,\mathcal{A},P_0)$. The abbreviation "f.a.p." stands for {\em finitely additive probability}. Let $\mathbb{P}$ denote the set of f.a.p.'s on $\mathcal{A}$ and $\mathbb{P}_0\subset\mathbb{P}$ the subset of countably additive members of $\mathbb{P}$. In particular, $P_0\in\mathbb{P}_0$.

We aim to give conditions for the existence of $P\in\mathbb{P}$ such that
\begin{equation}\label{goal}
P\sim P_0\quad\text{and}\quad E_P(X)=0\text{ for each }X\in L,
\end{equation}
or such that
\begin{equation}\label{goal7}
P\ll P_0\quad\text{and}\quad E_P(X)=0\text{ for each }X\in L.
\end{equation}
As usual, $P\sim P_0$ means that $P$ and $P_0$ have the same null sets, while $P\ll P_0$ stands for $P(A)=0$ whenever $A\in\mathcal{A}$ and $P_0(A)=0$.

Under \eqref{goal} or \eqref{goal7}, $P$ is called an (equivalent or absolutely continuous) {\em martingale f.a.p.}. It is called an (equivalent or absolutely continuous) {\em martingale measure} in case $P\in\mathbb{P}_0$ and \eqref{goal} or \eqref{goal7} hold. The term "martingale" is motivated as follows. Let $\mathcal{F}=(\mathcal{F}_t:t\in T)$ be a filtration and
$S=(S_t:t\in T)$ a real $\mathcal{F}$-adapted process on $(\Omega,\mathcal{A},P_0)$, where $T\subset\mathbb{R}$ is any index set. Suppose $S_t$ a bounded random variable for each $t\in T$ and define
\begin{gather*}
L(\mathcal{F},S)=\,\text{Span}\,\{I_A\,(S_t-S_s):s,t\in T,\,s<t,\,A\in\mathcal{F}_s\}.
\end{gather*}
If $P\in\mathbb{P}_0$, then $S$ is a
$P$-martingale (with respect to $\mathcal{F}$) if and only if $E_P(X)=0$ for all $X\in L(\mathcal{F},S)$. If $P\in\mathbb{P}$ but $P\notin\mathbb{P}_0$, it looks natural to {\em define} $S$ a $P$-martingale in case $E_P(X)=0$ for all $X\in L(\mathcal{F},S)$. In this sense, a f.a.p. $P$ satisfying \eqref{goal} or \eqref{goal7} is of the martingale type.

Why to look for martingale f.a.p.'s ? We try to answer
this question by four (non independent) remarks.

\vspace{0.15cm}

{\bf (i)} Dating from de Finetti, the finitely additive theory of
probability is well founded and developed, even if not prevailing.
F.a.p.'s can be always extended to the power set and have a solid
motivation in terms of coherence; see Section \ref{v7bn5g}. Also, there are problems which can not be solved in the usual
countably additive setting, while admit a finitely additive
solution. Examples are in conditional probability,
convergence in distribution of non measurable random elements,
Bayesian statistics, stochastic integration and the first
digit problem. See e.g. \cite{BR04} and references therein. Moreover, in the finitely additive approach, one can clearly use
$\sigma$-additive evaluations. Merely, one is not obliged to do so.

\vspace{0.15cm}

{\bf (ii)} Martingale measures play a role in various financial frameworks. Their economic motivations, however, do not depend on whether they are $\sigma$-additive or not. See e.g. Chapter 1 of \cite{DS05}. In option
pricing, for instance, martingale f.a.p.'s give
free-arbitrage prices, precisely as their $\sigma$-additive
counterparts. Note also that many underlying ideas, in arbitrage price theory, were anticipated by de Finetti and Ramsey.

\vspace{0.15cm}

{\bf (iii)} It may be that conditions \eqref{goal} or \eqref{goal7} fail for each $P\in\mathbb{P}_0$ but hold for some $P\in\mathbb{P}$. This actually happens in some classical examples; see Examples \ref{chissase34} and \ref{trivsper54vhui8}. In addition, existence of martingale f.a.p.'s (both equivalent and absolutely continuous) can be given a simple characterization; see
Theorem \ref{b5tfc32w}.

\vspace{0.15cm}

{\bf (iv)} Investigating \eqref{goal}-\eqref{goal7} is natural from the functional analytic point of view. For instance, a necessary condition for the existence of equivalent martingale f.a.p.'s is $\overline{L-L_\infty^+}\,\cap L_\infty^+=\{0\}$, with the closure in the {\em norm-topology} of $L_\infty$. Such a condition is sufficient as well in case $P_0$ is atomic; see Theorem \ref{funcanapv89}. Recall that $\overline{L-L_\infty^+}\,\cap L_\infty^+=\{0\}$, with the closure in the {\em weak-star topology} of $L_\infty$, is necessary and sufficient for the existence of equivalent martingale measures; see \cite{K} and \cite{S}.

\vspace{0.15cm}

We next state our main result (Theorem \ref{b5tfc32w}). Define
\begin{gather*}
\text{ess sup}(X)=\inf\{a\in\mathbb{R}:P_0(X>a)=0\}=\inf\{\,\sup_AX:A\in\mathcal{A},\,P_0(A)=1\},
\\\norm{X}_\infty=\max\{\,\text{ess sup}(X),\,\text{ess sup}(-X)\},
\end{gather*}
for each essentially bounded random variable $X$. There is an equivalent martingale f.a.p. if and only if
\begin{equation}\label{dai}
\begin{array}{c}
\text{there are }\,Q\in\mathbb{P}_0,\,Q\sim P_0,\text{ and a constant }c>0\text{ such that
}\\
\vspace{-3mm}\\
c\,E_Q(X)\leq\text{ess sup}(-X)\quad\text{for each }X\in L.
\end{array}
\end{equation}
In addition, there is an absolutely continuous martingale f.a.p. if and only if
\begin{equation}\label{dai7}
\text{ess sup}(X)\geq 0\quad\text{for each }X\in L.
\end{equation}

Condition \eqref{dai7} has a transparent meaning. Even if more subtle, condition \eqref{dai} is essentially an internality constraint. In a suitable financial framework, the quantity $\text{ess sup}(-X)$ can be interpreted as "the maximum loss"; see Example 4.1 of \cite{D02}. Note also that, in testing whether \eqref{dai} holds, one can tentatively let $Q=P_0$.

Condition \eqref{dai} is automatically true in case
\begin{equation}\label{newxcond}
\text{ess sup}\,(X)\leq c^*\text{ess sup}(-X),\quad X\in L,\,\norm{X}_\infty=1,
\end{equation}
for some constant $c^*>0$. Suppose in fact \eqref{newxcond} holds and let $c=1/c^*$, $Q=P_0$ and $X\in L$. Condition \eqref{dai} trivially holds if $\norm{X}_\infty=0$. And, if $\norm{X}_\infty>0$, one obtains
\begin{gather*}
c\,E_{P_0}(X)\leq c\,\text{ess sup}(X)=c\,\norm{X}_\infty\,\text{ess sup}\bigl(\frac{X}{\norm{X}_\infty}\bigr)
\\\leq\norm{X}_\infty\,\text{ess sup}\bigl(\frac{-X}{\norm{X}_\infty}\bigr)
=\text{ess sup}(-X).
\end{gather*}

A further remark concerns the {\em no-arbitrage} condition
\begin{equation}\label{ob67ce}
P_0(X>0)>0\quad\Longleftrightarrow\quad P_0(X<0)>0\quad\text{for each
}X\in L.
\end{equation}
It turns out that
\begin{equation*}
\eqref{goal}\Longrightarrow\eqref{ob67ce}\Longrightarrow\eqref{goal7}\quad\text{while the converse implications are not true},
\end{equation*}
where \eqref{goal} and \eqref{goal7} are meant to hold for some $P\in\mathbb{P}$. In particular, no-arbitrage implies existence of an absolutely continuous martingale f.a.p. (but not necessarily of an absolutely continuous martingale measure; see Example \ref{chissase34}).

In fact, \eqref{goal} $\Rightarrow$ \eqref{ob67ce} follows from the representation
\begin{equation*}
P=\alpha\,P_1+(1-\alpha)\,Q\quad\text{where }\alpha\in [0,1),\,P_1\in\mathbb{P},\,Q\in\mathbb{P}_0\text{ and }Q\sim P_0,
\end{equation*}
which can be given to any equivalent martingale f.a.p. $P$; see Theorem \ref{b5tfc32w}. Since \eqref{ob67ce} trivially implies \eqref{dai7} and \eqref{dai7} $\Leftrightarrow$ \eqref{goal7}, then \eqref{ob67ce} $\Rightarrow$ \eqref{goal7}. Example \ref{chissase34} exhibits a situation where \eqref{ob67ce} holds and \eqref{goal} fails. And an example where \eqref{goal7} holds and \eqref{ob67ce} fails is $\Omega=\{1,2,\ldots\}$, $P_0\{\omega\}=2^{-\omega}$ for all $\omega\in\Omega$ and $L$ the linear span of $X(\omega)=\frac{1}{\omega}$.

A last note deals with the assumption that $L$ consists of {\em bounded} random variables. Even if strong, such an assumption can not be dropped. In fact, while de Finetti's coherence principle (our main tool) can be extended to unbounded random variables, the extensions are very far from granting an integral representation; see \cite{BR}, \cite{BRR01} and references therein.

\section{de Finetti's coherence principle}\label{v7bn5g}

Given any set $S$, let $\mathcal{P}(S)$ denote the power set of $S$ and $l^\infty(S)$ the collection of
real bounded functions on $S$. We write $E_P(X)=\int X\,dP$
whenever $X\in l^\infty(S)$ and $P$ is a f.a.p. on $\mathcal{P}(S)$. Since $X$
is the uniform limit of a sequence of simple functions, the integral
$\int X\,dP$ can be meant essentially in the usual sense; see
\cite{BHARAO} for details.

We briefly recall the notion of coherence. For more information, as
well as for some extensions (including conditional coherence), we
refer to \cite{BR}, \cite{BRR01}, \cite{R}, \cite{R87} and references therein.

Let $D\subset l^\infty(\Omega)$ and $E:D\rightarrow\mathbb{R}$.
According to de Finetti, $E$ is {\em coherent} in case
\begin{equation*}
\sup\sum_{i=1}^nc_i\,\bigl\{X_i-E(X_i)\bigr\}\geq 0
\end{equation*}
for all $n\geq 1$, $c_1,\ldots,c_n\in\mathbb{R}$ and
$X_1,\ldots,X_n\in D$. Heuristically, suppose $E$ describes your
previsions on members of $D$. If $E$ is coherent, it is impossible
to make you a sure looser, whatever $\omega\in\Omega$ turns out to
be true, by some finite combinations of bets (on $X_1,\ldots,X_n$
with stakes $c_1,\ldots,c_n$).

Say that $E$ is {\em internal} in case $\inf X\leq E(X)\leq\sup X$
for all $X\in D$. If $D$ is a linear space, $E$ is coherent if and
only if it is linear and internal. Also, internality
reduces to $E(X)\leq\sup X$ for all $X\in D$, provided $D$ is a
linear space and $E$ a linear functional.

A coherent map $E$ can be coherently extended to $l^\infty(\Omega)$.
Suppose in fact $E$ is coherent. It is not hard to see that $E$ can
be extended to a linear internal functional $E_1$ on the linear
space spanned by $D$. In turn, by Hahn-Banach theorem, $E_1$ can be
extended to a linear internal functional $E_2$ on
$l^\infty(\Omega)$. Note that, letting $P(A)=E_2(I_A)$ for all
$A\subset\Omega$, one obtains a f.a.p. $P$ on $\mathcal{P}(\Omega)$. As simple
functions are dense in $l^\infty(\Omega)$ under the sup-norm, one
also obtains $E_2(X)=\int X\,dP$ for all $X\in l^\infty(\Omega)$.
Thus, $E:D\rightarrow\mathbb{R}$ is coherent if and only if
\begin{equation*}
E(X)=\int X\,dP=E_P(X),\quad X\in D,
\end{equation*}
for some f.a.p. $P$ on $\mathcal{P}(\Omega)$. This is, according to us, the
more transparent way of thinking of coherence.

We next give a couple of lemmas. The first is essentially known (see Section 10.3 of \cite{BHARAO}) but we prove it to keep the paper self-contained. Say that $P\in\mathbb{P}$ is {\em pure} in case it does not have a non trivial
$\sigma$-additive part, that is
\begin{equation*}
\text{if }\Gamma\text{ is a }\sigma\text{-additive measure on
}\mathcal{A}\text{ and }0\leq \Gamma\leq P,\text{ then }\Gamma=0.
\end{equation*}
By a result of Yosida-Hewitt, any $P\in\mathbb{P}$ can be written as
\begin{equation*}
P=\alpha\,P_1+(1-\alpha)\,Q
\end{equation*}
where $\alpha\in [0,1]$, $P_1\in\mathbb{P}$ is pure (unless $\alpha=0$) and $Q\in\mathbb{P}_0$.

\begin{lem}\label{pure}
Let $P\in\mathbb{P}$ be such that $P\ll P_0$. Then, $P$ is pure if and only if there is a countable partition $H_1,H_2,\ldots$ of $\Omega$ such
that $H_n\in\mathcal{A}$ and $P(H_n)=0$ for all $n$.
\end{lem}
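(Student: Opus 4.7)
The easy direction is sufficiency: given a countable partition $\{H_n\}$ of $\Omega$ in $\mathcal{A}$ with $P(H_n)=0$ for every $n$, any $\sigma$-additive $\Gamma$ with $0\leq \Gamma\leq P$ satisfies $\Gamma(H_n)=0$, so by countable additivity $\Gamma(\Omega)=\sum_n\Gamma(H_n)=0$ and thus $P$ is pure. (This direction does not even require $P\ll P_0$.) For the necessity, my plan is to produce a decreasing sequence $G_n\downarrow\emptyset$ in $\mathcal{A}$ with $P(G_n)=1$ for each $n$. From such a sequence the required partition is immediate: take $H_1:=\Omega\setminus G_1$ and $H_{n+1}:=G_n\setminus G_{n+1}$, which are pairwise disjoint, exhaust $\Omega$ since $\bigcap_n G_n=\emptyset$, and satisfy $P(H_k)=0$ by a telescoping finite-additivity computation.

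To construct the $G_n$, I would invoke the standard Yosida--Hewitt characterization of the purely finitely additive part of a measure that is absolutely continuous with respect to a $\sigma$-additive reference: since $P$ is pure and $P\ll P_0$, for every $\epsilon>0$ there is some $A\in\mathcal{A}$ with $P_0(A)<\epsilon$ and $P(A)>1-\epsilon$. Pick such $A_n$ with $P_0(A_n)<2^{-n}$ and $P(A_n)>1-2^{-n}$, and set $C_n:=\bigcup_{k\geq n}A_k$. Countable additivity of $P_0$ gives $P_0(C_n)\leq 2^{1-n}\to 0$; on the other hand $C_n\supseteq A_n$ forces $P(C_n)>1-2^{-n}$, and since $P(C_n)$ is monotone decreasing in $n$ and $P(C_n)\geq P(C_k)>1-2^{-k}$ for all $k\geq n$, letting $k\to\infty$ yields $P(C_n)\geq 1$, hence $P(C_n)=1$. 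Now let $L:=\bigcap_n C_n$; then $P_0(L)=0$, and here $P\ll P_0$ enters crucially to give $P(L)=0$. Setting $G_n:=C_n\setminus L$, we obtain $G_n\downarrow\emptyset$ and $P(G_n)=P(C_n)-P(L)=1$.

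The only non-routine ingredient is the Yosida--Hewitt characterization invoked at the start of the previous paragraph. It can be established by a Hahn--Banach or exhaustion argument: if for some $\epsilon_0>0$ no such $A$ existed, one could isolate a nontrivial countably additive minorant of $P$, contradicting purity. The paper acknowledges the lemma is essentially known and cites Bhaskara Rao for this background. Everything else is bookkeeping: countable additivity of $P_0$ on $C_n$, a single application of $P\ll P_0$ to kill the residual set $L$, and elementary set-complement manipulations to assemble the partition.
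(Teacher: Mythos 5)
Your proof is correct, and its skeleton parallels the paper's: both reduce the ``only if'' direction to producing, for each $\epsilon>0$, a set on which $P$ concentrates while $P_0$ is small, then assemble the partition by elementary set manipulations, using $P\ll P_0$ exactly once to dispose of a residual $P_0$-null set (your $L$, the paper's $B=(\cup_nA_n)^c$). The genuine difference is where the work is done. The paper does not cite the Yosida--Hewitt concentration property; it proves it, by explicitly constructing the infimum charge $\Gamma(A)=\inf\{P(B)+P_0(A\setminus B):B\in\mathcal{A},\,B\subset A\}$, noting that $0\le\Gamma\le P_0$ forces $\Gamma$ to be $\sigma$-additive while $0\le\Gamma\le P$ and purity force $\Gamma=0$, and then extracting sets $B_n$ with $P(B_n)+P_0(B_n^c)<\epsilon/2^n$ whose intersection $A$ satisfies $P(A)=0$ \emph{exactly} and $P_0(A^c)<\epsilon$. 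You instead import the characterization as a known black box, get only the approximate version $P(A_n)>1-2^{-n}$, and then upgrade to exact concentration $P(C_n)=1$ via the tail unions $C_n=\cup_{k\ge n}A_k$ --- a clean trick the paper does not need because $\Gamma(\Omega)=0$ hands it exact nullity directly. Citing the characterization is defensible (it is the standard Yosida--Hewitt theorem, and the paper itself calls the lemma ``essentially known,'' pointing to Section 10.3 of Bhaskara Rao), but be aware that this citation is the \emph{entire} non-routine content of the lemma: your one-line sketch of its proof (``isolate a nontrivial countably additive minorant'') is exactly what the paper's $\Gamma$ construction carries out, so a self-contained writeup would have to include that construction or an equivalent exhaustion argument. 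Everything else in your proposal --- the sufficiency direction, the monotonicity and telescoping computations, and the verification that $H_1=\Omega\setminus G_1$, $H_{n+1}=G_n\setminus G_{n+1}$ form a partition with $P(H_k)=0$ --- checks out.
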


\begin{proof}
The "if" part is trivial. Suppose $P$ is pure. It suffices to
prove that, given $\epsilon>0$, there is $A\in\mathcal{A}$ with
$P(A)=0$ and $P_0(A^c)<\epsilon$. In this case, in fact, there is
an increasing sequence $A_1\subset A_2\subset\ldots$ satisfying $A_n\in\mathcal{A}$, $P(A_n)=0$ and $P_0(A_n^c)<1\,/\,n$ for
all $n\geq 1$. Let $B=(\cup_n A_n)^c$, $H_1=A_1\cup B$, and
$H_n=A_n\setminus A_{n-1}$ for $n>1$. Then, $H_1,H_2,\ldots$ is a
partition of $\Omega$ in $\mathcal{A}$ and $P(H_n)=0$
for $n>1$. Also, $P(H_1)=P(B)=0$ since $P\ll P_0$ and $P_0(B)=0$. Next, fix $\epsilon>0$, and define
\begin{equation*}
\Gamma(A)=\inf\{P(B)+P_0(A\setminus B):B\in\mathcal{A},\,B\subset
A\},\quad A\in\mathcal{A}.
\end{equation*}
It is straightforward to check that $\Gamma$ is a finitely additive
measure on $\mathcal{A}$. Since $P_0\in\mathbb{P}_0$ and $0\leq\Gamma\leq
P_0$, then $\Gamma$ is $\sigma$-additive. Since $P$ is pure and $0\leq\Gamma\leq P$, then $\Gamma=0$. Hence, for each $n\geq 1$, there is
$B_n\in\mathcal{A}$ satisfying
$P(B_n)+P_0(B_n^c)<\epsilon\,/\,2^n$. Let $A=\cap_nB_n$. Then, $A\in\mathcal{A}$, $P(A)=0$ and $P_0(A^c)\leq\sum_nP_0(B_n^c)<\epsilon$.
\end{proof}

The second lemma is fundamental for our main results. It is connected to Lemma 1 of \cite{HS}.

\begin{lem}\label{kumon} Let $D\subset l^\infty(\Omega)$ be a linear space,
$E:D\rightarrow\mathbb{R}$ a linear functional, and $\mathcal{E}$ a
class of subsets of $\Omega$ such that $A\cap B\in\mathcal{E}$
whenever $A,\,B\in\mathcal{E}$. There is a f.a.p. $P$ on $\mathcal{P}(\Omega)$
satisfying
\begin{equation*}
E(X)=E_P(X)\quad\text{and}\quad P(A)=1\quad\text{for all }X\in
D\text{ and }A\in\mathcal{E}
\end{equation*}
if and only if
\begin{equation}\label{ng5rt}
\sup_AX\geq E(X)\quad\text{for all }X\in D\text{ and
}A\in\mathcal{E}.
\end{equation}
\end{lem}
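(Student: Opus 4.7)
The plan is to reduce to a Hahn--Banach extension against a sublinear functional built from $\mathcal{E}$. The \emph{only if} direction is essentially one line: if $P$ satisfies the stated conditions, then $P(A^c)=0$ for every $A \in \mathcal{E}$, so $E(X) = E_P(X) = \int_A X\,dP \leq \sup_A X$ for each $X \in D$.

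For the \emph{if} direction, I would introduce the functional
$$p(X) = \inf_{A \in \mathcal{E}}\,\sup_A X, \qquad X \in l^\infty(\Omega).$$
Since $\inf X \leq p(X) \leq \sup X$ it is real-valued, and positive homogeneity is immediate. The crux is subadditivity: given $A, B \in \mathcal{E}$, the intersection $A \cap B$ again lies in $\mathcal{E}$ by hypothesis, and $\sup_{A\cap B}(X+Y) \leq \sup_A X + \sup_B Y$; taking infima independently over $A$ and $B$ yields $p(X+Y) \leq p(X)+p(Y)$. Hypothesis \eqref{ng5rt} reads exactly $E \leq p$ on $D$, so Hahn--Banach extends $E$ to a linear functional $\widetilde E$ on $l^\infty(\Omega)$ with $\widetilde E \leq p$.

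Applying $\widetilde E \leq p$ to $\pm X$ gives $\inf X \leq \widetilde E(X) \leq \sup X$, so $\widetilde E$ is internal, hence coherent, and by the correspondence recalled in Section \ref{v7bn5g} it coincides with integration against some f.a.p.\ $P$ on $\mathcal{P}(\Omega)$. This $P$ extends $E$ by construction. Finally, for $A \in \mathcal{E}$ the bounds $p(I_A) \leq \sup_A I_A = 1$ and $p(-I_A) \leq \sup_A(-I_A) = -1$ give $1 \leq \widetilde E(I_A) \leq 1$, that is $P(A)=1$. The only conceptual hurdle is guessing the sublinear gauge $p$; the $\pi$-system hypothesis on $\mathcal{E}$ is used precisely to secure subadditivity, and the argument would break down without closure under binary intersections.
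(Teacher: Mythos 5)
Your argument is correct, and it reaches the conclusion by a genuinely different route than the paper. The paper fixes one $A\in\mathcal{E}$ at a time, observes that \eqref{ng5rt} makes $\phi(X|A)=E(X)$ a well-defined coherent map on the restrictions $\{X|A:X\in D\}$, represents it by a f.a.p.\ concentrated on $A$, and then glues these together by a compactness argument: the sets $F_A$ of representing f.a.p.'s are closed in the product topology on $[0,1]^{\mathcal{P}(\Omega)}$, and closure of $\mathcal{E}$ under intersections is used to give $\{F_A:A\in\mathcal{E}\}$ the finite intersection property. You instead perform a single Hahn--Banach extension against the sublinear gauge $p(X)=\inf_{A\in\mathcal{E}}\sup_AX$, and the intersection hypothesis enters earlier and more algebraically, as the source of subadditivity of $p$; the verification $P(A)=1$ via $p(\pm I_A)$ is clean. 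The two proofs rest on comparable nonconstructive principles (Tychonoff for cubes versus Hahn--Banach), but yours is shorter and makes the role of the $\pi$-system hypothesis explicit, while the paper's version keeps the probabilistic content (coherence on each $A$) in the foreground and reuses the machinery of its Section 2. One small point worth a sentence in a final write-up: for $p$ to be finite and for $\sup_AI_A=1$ you need every $A\in\mathcal{E}$ nonempty and $\mathcal{E}\neq\emptyset$; the first is automatic under \eqref{ng5rt} (take $X=0$), and the second is a tacit standing assumption shared with the paper's own proof and satisfied in the application, where $\Omega\in\mathcal{E}$.
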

\begin{proof}
The "only if" part is trivial. Suppose \eqref{ng5rt} holds and fix
$A\in\mathcal{E}$. If $X,\,Y\in D$ and $X=Y$ on $A$, then
\eqref{ng5rt} implies $E(X)=E(Y)$. Hence, one can define
$\phi(X|A)=E(X)$ for $X\in D$, where $X|A$ denotes the restriction
of $X$ to $A$. By \eqref{ng5rt}, $\phi$ is a coherent map on
$\{X|A:X\in D\}$. Take a f.a.p. $P_1$ on $\mathcal{P}(A)$ satisfying
$\phi(X|A)=\int(X|A)\,dP_1$, $X\in D$, and define
$P(B)=P_1(A\cap B)$ for $B\subset\Omega$. Then,
$P$ is a f.a.p. on $\mathcal{P}(\Omega)$ such that $P(A)=1$ and
$E(X)=E_P(X)$ for all $X\in D$. Next, let $\mathcal{Z}$ be the
class of $[0,1]$-valued functions on $\mathcal{P}(\Omega)$. When equipped with the product topology, $\mathcal{Z}$ is
compact and
\begin{equation*}
F_A=\{P\in\mathcal{Z}:P\text{ is a f.a.p.,
}P(A)=1,\,E(X)=E_P(X)\text{ for all }X\in D\}
\end{equation*}
is closed for each $A\subset\Omega$. By what already proved,
$F_A\neq\emptyset$ for all $A\in\mathcal{E}$. Hence, since $\mathcal{E}$ is
closed under finite intersections, $\{F_A:A\in\mathcal{E}\}$
has the finite intersection property. It follows that
$\bigcap_{A\in\mathcal{E}}F_A\neq\emptyset$, and this concludes the
proof.
\end{proof}

\section{Equivalent and absolutely continuous martingale f.a.p.'s}

\begin{thm}\label{b5tfc32w}
\hspace{0.5cm}\\
\vspace{-0.25cm}
\begin{itemize}
\item[{\bf (a)}] There is $P\in\mathbb{P}$ such that $P\ll P_0$ and
$E_P(X)=0$, $X\in L$, if and only if $\,\text{ess sup}(X)\geq 0$ for each $X\in L$.
\item[{\bf (b)}] There is $P\in\mathbb{P}$ such that $P\sim P_0$ and
$E_P(X)=0$, $X\in L$, if and only if condition \eqref{dai} holds.
\end{itemize}
Moreover, every equivalent martingale f.a.p. $P$ admits the representation \linebreak $P=\alpha\,P_1+(1-\alpha)\,Q$ where $\alpha\in [0,1)$, $P_1\in\mathbb{P}$ is pure (unless $\alpha=0$), $Q\in\mathbb{P}_0$ and $Q\sim P_0$.
\end{thm}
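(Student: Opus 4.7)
Part (a) is quick in both directions. For necessity, if $\text{ess sup}(X) < 0$ there is $A \in \mathcal{A}$ with $P_0(A) = 1$ and $\sup_A X < 0$; since $P \ll P_0$ we have $P(A^c) = 0$, whence $E_P(X) \leq \sup_A X < 0$, contradicting $E_P(X) = 0$. For sufficiency, I would apply Lemma \ref{kumon} with $D = L$, $E \equiv 0$, and $\mathcal{E} = \{A \in \mathcal{A} : P_0(A) = 1\}$, which is closed under finite intersections. The hypothesis $\text{ess sup}(X) \geq 0$ is precisely the inequality \eqref{ng5rt}, so the lemma produces a f.a.p. $P$ on $\mathcal{P}(\Omega)$ with $E_P(X) = 0$ on $L$ and $P(A) = 1$ for every $A \in \mathcal{E}$; restricting to $\mathcal{A}$, the implication $P_0(B) = 0 \Rightarrow B^c \in \mathcal{E} \Rightarrow P(B) = 0$ gives $P \ll P_0$.

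The representation of equivalent martingale f.a.p.'s is the main obstacle. Apply the Yosida--Hewitt splitting $P = P_p + P_c$ with $P_p$ pure and $P_c \in \mathbb{P}_0$; since $P \sim P_0$ both summands satisfy $P_p, P_c \ll P_0$. Let $\alpha = P_p(\Omega)$ and show first that $\alpha < 1$: otherwise $P = P_p$ is pure, Lemma \ref{pure} yields a partition $(H_n) \subset \mathcal{A}$ with $P(H_n) = 0$, hence $P_0(H_n) = 0$ by $P \sim P_0$, contradicting countable additivity of $P_0$. Set $Q = P_c / (1-\alpha)$ and, when $\alpha > 0$, $P_1 = P_p/\alpha$. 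The subtle claim is $Q \sim P_0$; only $P_0 \ll Q$ requires work. Suppose $Q(A) = 0$; for $\alpha = 0$ this is immediate. For $\alpha > 0$, apply Lemma \ref{pure} to the pure $P_1$ (which satisfies $P_1 \ll P_0$) to obtain a partition $(H_n) \subset \mathcal{A}$ with $P_1(H_n) = 0$. Then
\begin{equation*}
P(A \cap H_n) \leq \alpha P_1(H_n) + (1-\alpha) Q(A) = 0,
\end{equation*}
so $P_0(A \cap H_n) = 0$ by $P \sim P_0$, and countable additivity of $P_0$ yields $P_0(A) = 0$.

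Part (b) now follows. For necessity, the representation gives $0 = \alpha E_{P_1}(X) + (1-\alpha) E_Q(X)$, and $P_1 \ll P_0$ yields $E_{P_1}(-X) \leq \text{ess sup}(-X)$, so $(1-\alpha) E_Q(X) \leq \alpha \, \text{ess sup}(-X)$, which is \eqref{dai} with $c = \alpha/(1-\alpha)$ when $\alpha > 0$; when $\alpha = 0$, $E_Q(X) = 0$ and $Q \sim P_0$ force $\text{ess sup}(-X) \geq 0$, so any $c > 0$ works. For sufficiency, given $Q$ and $c$ satisfying \eqref{dai}, apply Lemma \ref{kumon} with $D = L$, $E(X) = -c E_Q(X)$, and the same $\mathcal{E}$ as in part (a): the required condition $\sup_A X \geq -c E_Q(X)$ reads, upon taking the infimum over $A \in \mathcal{E}$, $c E_Q(-X) \leq \text{ess sup}(X)$, which is exactly \eqref{dai} applied to $-X \in L$. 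The lemma yields $R$ with $R \ll P_0$ and $E_R = -c E_Q$ on $L$; setting $\lambda = c/(1+c) \in (0,1)$ and $P = \lambda Q + (1-\lambda) R$, the choice of $\lambda$ forces $E_P(X) = [\lambda - (1-\lambda)c] E_Q(X) = 0$ on $L$, while $P \ll P_0$ is clear and $P \geq \lambda Q$ gives $P_0 \ll P$, so $P \sim P_0$.
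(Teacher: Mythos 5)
Your proof is correct and follows the same skeleton as the paper's: Lemma \ref{kumon} with $\mathcal{E}=\{A\in\mathcal{A}:P_0(A)=1\}$ for both existence claims, and Yosida--Hewitt plus Lemma \ref{pure} for the representation, from which \eqref{dai} is then read off. The one place where you genuinely diverge is the proof that $P_0\ll Q$: the paper fixes $A=\{f=0\}$ for a density $f$ of $Q$ with respect to $P_0$ and runs a contradiction argument through the conditional probabilities $P_1(\cdot\mid A)\sim P_0(\cdot\mid A)$, whereas you take an arbitrary $A$ with $Q(A)=0$, pull the partition $(H_n)$ with $P_1(H_n)=0$ out of Lemma \ref{pure} applied to the pure part $P_1$, observe $P(A\cap H_n)\leq\alpha P_1(H_n)+(1-\alpha)Q(A)=0$, and sum over $n$ using countable additivity of $P_0$. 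Your version is more direct, avoids the Radon--Nikodym density and the contradiction entirely, and is arguably cleaner; it buys nothing in generality but is a genuine simplification of that step. One small slip: in the necessity of (b) with $\alpha>0$, your inequality $(1-\alpha)E_Q(X)\leq\alpha\,\text{ess sup}(-X)$ gives \eqref{dai} with $c=(1-\alpha)/\alpha$, not $c=\alpha/(1-\alpha)$; the constant you wrote is the reciprocal of the one your computation justifies (the paper uses $c=\frac{1-\alpha}{\alpha}$). This is a transcription error, not a gap.
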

\begin{proof} Let $E:L\rightarrow\mathbb{R}$ and $\mathcal{E}=\{A\in\mathcal{A}:P_0(A)=1\}$. Since the elements of $L$ are $\mathcal{A}$-measurable, there is $P\in\mathbb{P}$ such that $P\ll P_0$  and $E_P(X)=E(X)$ for $X\in L$ if and only if there is a f.a.p. $T$ on $\mathcal{P}(\Omega)$ such that $T(A)=1$ and $E_T(X)=E(X)$ for $A\in\mathcal{E}$ and $X\in L$. Hence, part (a) follows from Lemma \ref{kumon} applied with $D=L$ and $E=0$. As to part (b), suppose condition \eqref{dai} holds and define $D=L$ and $E(X)=-c\,E_Q(X)$ for $X\in L$. By \eqref{dai},
\begin{equation*}
E(X)=c\,E_Q(-X)\leq\text{ess sup}(X)\leq\sup_AX\quad\text{for all }X\in L\text{ and }A\in\mathcal{E}.
\end{equation*}
By Lemma \ref{kumon},
there is $P_1\in\mathbb{P}$ such that $P_1\ll P_0$ and
$E_{P_1}(X)=-c\,E_Q(X)$ for $X\in L$. Hence, an equivalent martingale f.a.p. is
\begin{equation*}
P=\frac{P_1+c\,Q}{1+c}.
\end{equation*}

Next, let $P\in\mathbb{P}$ be such that $P\sim P_0$ and $E_P(X)=0$ for $X\in L$. To get condition \eqref{dai}, it suffices to show that $P=\alpha\,P_1+(1-\alpha)\,Q$ where $\alpha\in [0,1)$, $P_1\in\mathbb{P}$ is pure (unless $\alpha=0$), $Q\in\mathbb{P}_0$ and $Q\sim P_0$. In fact, suppose that $P$ can be written in this way. If $\alpha=0$, letting $c=1$ and $Q=P$, one trivially obtains
\begin{equation*}
c\,E_Q(X)=E_P(X)=0=E_P(-X)\leq\text{ess sup}(-X)\quad\text{for all }X\in L.
\end{equation*}
If $\alpha\in (0,1)$, then $P_1\ll P_0$ so that $E_{P_1}(X)\leq\text{ess sup}(X)$ for each bounded random variable $X$. Let $c=\frac{1-\alpha}{\alpha}$ and $X\in L$. Since $E_P(X)=0$, one again obtains
\begin{gather*}
c\,E_Q(X)=\frac{1-\alpha}{\alpha}\,\,\frac{E_P(X)-\alpha\,E_{P_1}(X)}{1-\alpha}=-E_{P_1}(X)=E_{P_1}(-X)\leq\text{ess sup}(-X).
\end{gather*}

We finally prove that $P$ admits the desired representation. By Yosida-Hewitt's theorem, $P=\alpha\,P_1+(1-\alpha)\,Q$ with $\alpha\in[0,1]$, $P_1\in\mathbb{P}$ pure (unless $\alpha=0$) and $Q\in\mathbb{P}_0$. Let
$H_1,H_2,\ldots$ be a countable partition of
$\Omega$ in $\mathcal{A}$. Since $P\sim P_0$, it must be
$P(H_n)>0$ for some $n$. By Lemma \ref{pure}, $P$ is not pure. Hence $\alpha<1$, and this in turn implies $Q\ll P_0$. It remains to show that $Q\sim P_0$. If $\alpha=0$, then $Q=P\sim P_0$. Suppose $\alpha\in (0,1)$. Let $A=\{f=0\}$ where $f$ is a density of $Q$ with respect to $P_0$. If
$P_1(A)=0$, then $P(A)=(1-\alpha)\,Q(A)=0$, so that $P_0(A)=0$. Thus, it can be
assumed $P_1(A)>0$. Toward a contradiction, suppose
also that $P_0(A)>0$. In that case, since $P\sim P_0$ and $Q(A)=0$,
one obtains
\begin{equation*}
P_1(\cdot\mid
A)\sim P_0(\cdot\mid A).
\end{equation*}
Hence, if $H_1,H_2,\ldots$ is as above, it must be $P_1(H_n)\geq P_1(A)\,P_1(H_n\mid
A)>0$ for some $n$. Since $P_1\ll P_0$, Lemma \ref{pure} implies that $P_1$ is not pure, and this is a contradiction. Therefore $P_0(A)=0$, that is, $Q\sim P_0$. This concludes the proof.

\end{proof}

Theorem \ref{b5tfc32w} is our main result. To stress its possible role, we discuss a few (classical) examples. Recall (from Section \ref{intro}) that \eqref{newxcond} $\Rightarrow$ \eqref{dai}.

\begin{ex} {\bf (Finite state space)}\label{finnumat65}
If $\Omega$ is finite, no-arbitrage implies existence of equivalent martingale measures. This {\em well known} fact follows trivially from Theorem \ref{b5tfc32w}. Indeed, when $\Omega$ is finite, $\mathbb{P}=\mathbb{P}_0$ and $L$ is finite-dimensional. Thus, by Theorem \ref{b5tfc32w}, it suffices to show that \eqref{ob67ce} $\Rightarrow$ \eqref{newxcond} if $L$ is finite-dimensional. Regard $L$ as a subspace of $L_\infty$, where $L_\infty=L_\infty(\Omega,\mathcal{A},P_0)$ is equipped with the norm-topology. Define $K=\{X\in L:\norm{X}_\infty=1\}$ and
\begin{equation*}
\phi(X)=\frac{\text{ess sup}(X)}{\text{ess sup}(-X)}\quad\text{for all }X\in K.
\end{equation*}
By \eqref{ob67ce}, $\phi:K\rightarrow (0,\infty)$ is well defined and continuous. Since $L$ is finite-dimensional, it is not hard to see that $K$ is compact. Thus, condition \eqref{newxcond} holds.

\end{ex}

A $P_0$-{\em atom} is a set $A\in\mathcal{A}$ with $P_0(A)>0$ and $P_0(\cdot\mid A)\in\{0,1\}$, and $P_0$ is {\em atomic} if there is a countable partition $A_1,A_2,\ldots$ of $\Omega$ such that $A_n$ is a $P_0$-atom for all $n$.

\begin{ex} {\bf (Atomic $P_0$)}\label{opl98v5z3mk8}
As in Example \ref{finnumat65}, sometimes, Theorem \ref{b5tfc32w} helps in proving existence of {\em equivalent martingale measures}. Suppose $P_0$ atomic and fix a partition $A_1,A_2,\ldots$ of $\Omega$ with each $A_n$ a $P_0$-atom. If the $A_n$ are finitely many, we are essentially in the framework of Example \ref{finnumat65}. Thus, suppose the $A_n$ are infinitely many. Then, there is an equivalent martingale measure if condition \eqref{newxcond} holds and
\begin{equation}\label{c0ijhy}
\lim_nX|A_n=0\quad\text{for all }X\in L.
\end{equation}
Here, $X|A_n$ denotes the a.s.-constant value of $X$ on $A_n$. In fact, by \eqref{newxcond} and Theorem \ref{b5tfc32w}, there is an equivalent martingale f.a.p. $P$. Write $P=\alpha\,P_1+(1-\alpha)\,Q$, where $\alpha\in [0,1)$, $P_1\in\mathbb{P}$ is pure (unless $\alpha=0$), $Q\in\mathbb{P}_0$ and $Q\sim P_0$. If $\alpha=0$, then $Q=P$ is an equivalent martingale measure. Let $\alpha>0$. If $P_1(A_n)>0$, since $P_1\ll P_0$ and $P_0(\cdot\mid A_n)$ is 0-1 valued, one obtains $P_1(\cdot\mid A_n)=P_0(\cdot\mid A_n)$. But this is a contradiction, for $P_1(\cdot\mid A_n)$ is pure. Hence, $P_1(A_n)=0$ for all $n$, and condition \eqref{c0ijhy} implies $E_{P_1}(X)=0$ for all $X\in L$. Therefore, $Q$ is again an equivalent martingale measure. Finally, suppose \eqref{c0ijhy} fails. Then, there is an equivalent martingale measure provided condition \eqref{newxcond} is turned into
\begin{equation}\label{primaopoi98vc}
\text{ess sup}\,(X\,Y)\leq c^*\text{ess sup}(-X\,Y),\quad X\in L,\,\norm{X\,Y}_\infty=1,\tag{5*}
\end{equation}
for some constant $c^*>0$ and bounded random variable $Y$ such that $Y>0$ and $\lim_nY|A_n=0$. In fact, $\lim_nXY|A_n=0$ for all $X\in L$, so that condition \eqref{primaopoi98vc} yields $E_Q(X\,Y)=0$, $X\in L$, for some $Q\in\mathbb{P}_0$, $Q\sim P_0$. Hence, an equivalent martingale measure is $Q^*(A)=E_Q\bigl(Y\,I_A\bigr)/E_Q(Y)$, $A\in\mathcal{A}$.

\end{ex}

\begin{ex}\label{chissase34} {\bf (An example from \cite{DMW} revisited)} Let $Y_n:\Omega\rightarrow\{-1,1\}$, $n\geq 1$, and $\mathcal{A}=\sigma(Y_1,Y_2,\ldots)$. Suppose $(Y_n)$ i.i.d. under $P_0$ with $0<P_0(Y_1=1)<1/2$. Also, suppose $(Y_n)$ i.i.d. under $Q_0$, where $Q_0\in \mathbb{P}_0$, with $Q_0(Y_1=1)=1/2$. Define
\begin{equation*}
S_0=0,\quad S_n=\sum_{i=1}^nY_i,\quad \mathcal{F}_n=\sigma(S_0,S_1,\ldots,S_n),\quad L=L(\mathcal{F},S),
\end{equation*}
where $L(\mathcal{F},S)$ has been defined in Section \ref{intro}.

Since $P_0\bigl(Y_1=y_1,\ldots,Y_n=y_n\bigr)>0$ for all $n\geq 1$ and $y_1,\ldots,y_n\in\{-1,1\}$, there is no-arbitrage, i.e., condition \eqref{ob67ce} holds. In addition, if $P\in\mathbb{P}$ is such that $E_P(X)=0$ for $X\in L$, then $P=Q_0$ on $\cup_n\mathcal{F}_n$. In fact, $E_P(I_A\,Y_n)=0$ yields $P\bigl(A\cap\{Y_n=1\}\bigr)=P\bigl(A\cap\{Y_n=-1\}\bigr)$ for each $n\geq 1$ and $A\in\mathcal{F}_{n-1}$. Thus, $(Y_n)$ is i.i.d. under $P$ with $P(Y_1=1)=1/2$.

Since $Q_0$ is the only member of $\mathbb{P}_0$ which makes $(S_n)$ a martingale and
\begin{equation*}
\frac{S_n}{n}\overset{Q_0-a.s.}\longrightarrow 0\quad\text{while}\quad\frac{S_n}{n}\overset{P_0-a.s.}\longrightarrow E_{P_0}(Y_1)< 0,
\end{equation*}
there are not absolutely continuous martingale measures. Instead, by Theorem \ref{b5tfc32w} and no-arbitrage, there are absolutely continuous martingale f.a.p.'s. Finally, no equivalent martingale f.a.p. is available. Suppose in fact $P$ is an equivalent martingale f.a.p.. By Theorem \ref{b5tfc32w} and since $E_P(X)=0$ for $X\in L$,
\begin{equation*}
Q_0=P\geq (1-\alpha)\,Q\quad\text{on }\cup_n\mathcal{F}_n
\end{equation*}
for some $\alpha\in [0,1)$ and $Q\in\mathbb{P}_0$ such that $Q\sim P_0$. Since $Q,\,Q_0\in\mathbb{P}_0$ and $\cup_n\mathcal{F}_n$ is a field, it follows that $Q_0\geq (1-\alpha)\,Q$ on $\sigma\bigl(\cup_n\mathcal{F}_n\bigr)=\mathcal{A}$. On noting that $\alpha<1$, one obtains the contradiction $P_0\sim Q\ll Q_0$.
\end{ex}

Equivalent martingale f.a.p.'s may be available even if equivalent martingale measures fail to exist. As a trivial example, take a pure f.a.p. $P_1$ such that $P_1\ll P_0$ (such a $P_1$ exists for several choices of $P_0$). Define $P=\frac{P_0+P_1}{2}$ and
\begin{equation*}
L=\{X:X\text{ bounded random variable, }E_P(X)=0\}.
\end{equation*}
Then, $P\sim P_0$. But for each $Q\in\mathbb{P}_0$, since $Q\neq P$, one obtains $E_Q(X)\neq 0$ for some $X\in L$. Here is a less trivial example.

\begin{ex}\label{trivsper54vhui8} {\bf (An example from \cite{BP} revisited)}
Let $\Omega=\{1,2,\ldots\}$, $\mathcal{A}=\mathcal{P}(\Omega)$, and $P_0\{\omega\}=2^{-\omega}$ for all $\omega\in\Omega$. For each $n\geq 0$, define $A_n=\{n+1,n+2,\ldots\}$. Define also $L=L(\mathcal{F},S)$, where
\begin{gather*}
\mathcal{F}_0=\{\emptyset,\Omega\},\quad\mathcal{F}_n=\sigma\bigl(\{1\},\ldots,\{n\}\bigr),\quad S_0=1,\quad\text{and}
\\ S_n(\omega)=\frac{1}{2^n}\,I_{A_n}(\omega)+\frac{\omega^2+2\omega+2}{2^\omega}\,(1-I_{A_n}(\omega))\quad\text{for all }\omega\in\Omega.
\end{gather*}
As shown in \cite{BP}, no $Q\in\mathbb{P}_0$ satisfies $E_Q(X)=0$ for all $X\in L$. However, equivalent martingale f.a.p.'s are available. Define in fact $Q\{\omega\}=\frac{1}{\omega}-\frac{1}{\omega+1}$ for all $\omega\in\Omega$. Then, $Q\in\mathbb{P}_0$ and $Q\sim P_0$. Since $S_{n+1}=S_n$ on $A_n^c$, each $X\in L(\mathcal{F},S)$ can be written as
\begin{gather*}
X=\sum_{j=0}^kb_j\,I_{A_j}\,(S_{j+1}-S_j)
\end{gather*}
for some $k\geq 0$ and $b_0,\ldots,b_k\in\mathbb{R}$. On noting that $X=-\sum_{j=0}^k\frac{b_j}{2^{j+1}}$ on $A_{k+1}$, one obtains
\begin{gather*}
E_Q(X)=\sum_{j=0}^k\frac{b_j}{2^{j+1}}\,\big\{\bigl((j+1)^2+2(j+1)\bigr)\,Q\{j+1\}-Q(A_{j+1})\bigr\}
\\=\sum_{j=0}^k\frac{b_j}{2^{j+1}}\,\big\{\frac{(j+1)^2+2(j+1)}{(j+1)(j+2)}-\frac{1}{(j+2)}\bigr\}
\\=\sum_{j=0}^k\frac{b_j}{2^{j+1}}\leq\sup\,(-X)=\text{ess sup}(-X).
\end{gather*}
Therefore, condition \eqref{dai} holds (with $c=1$) and Theorem \ref{b5tfc32w} grants the existence of an equivalent martingale f.a.p. $P$. Incidentally, such a $P$ can be taken of the form $P=\frac{Q+P_1}{2}$, where $Q$ is as above and $P_1\in\mathbb{P}$ is such that $P_1(A_n)=1$ for all $n$. Note also that condition \eqref{newxcond} fails in this example.
\end{ex}

Finally, we take a functional analytic point of view and we investigate the connections between existence of equivalent martingale f.a.p.'s and measures. Write $U-V=\{u-v:u\in U,\,v\in V\}$ whenever $U,\,V$ are subsets of a linear space. Let $L_p=L_p(\Omega,\mathcal{A},P_0)$ for all $p\in [1,\infty]$. We regard $L$ as a subspace of $L_\infty$ and we let $L_\infty^+=\{X\in L_\infty:X\geq 0\}$. Since $L_\infty$ is the dual of $L_1$, it can be equipped with the weak-star topology $\sigma(L_\infty,L_1)$. Thus, $\sigma(L_\infty,L_1)$ is the topology on $L_\infty$ generated by the linear functionals $X\mapsto E_{P_0}\bigl(X\,Y)$ for $Y\in L_1$.

A classical result of Kreps \cite{K} (see also \cite{S}) states that existence of equivalent martingale measures amounts to
\begin{equation*}
\overline{L-L_\infty^+}\,\cap L_\infty^+=\{0\}\quad\text{with the closure in }\sigma(L_\infty,L_1).
\end{equation*}
A (natural) question is what happens if the closure is taken in the norm-topology.

\begin{thm}\label{funcanapv89}
There is an equivalent martingale f.a.p. if and only if
\begin{equation}\label{bazg78j}
\begin{array}{c}
L_\infty^+\subset U\cup\{0\}\quad\text{and}\quad (L-L_\infty^+)\cap U=\emptyset\\
\vspace{-3mm}\\
\text{for some norm-open convex set }U\subset L_\infty.
\end{array}
\end{equation}
In particular, a necessary condition for the existence of an equivalent martingale f.a.p. is
\begin{equation}\label{margr}
\overline{L-L_\infty^+}\,\cap L_\infty^+=\{0\}\quad\text{with the closure in the norm-topology}.
\end{equation}
If $P_0$ is atomic, condition \eqref{margr} is sufficient as well.
\end{thm}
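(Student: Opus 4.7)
The plan is to prove three things: existence of an equivalent martingale f.a.p.\ is equivalent to \eqref{bazg78j}; the condition \eqref{bazg78j} implies \eqref{margr}; and under atomicity of $P_0$, \eqref{margr} implies \eqref{bazg78j}.

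For the main equivalence, in the forward direction let $P$ be an equivalent martingale f.a.p. Since $P\ll P_0$, $E_P$ is a norm-continuous linear functional on $L_\infty$, so $U=\{Z\in L_\infty:E_P(Z)>0\}$ is norm-open and convex. It contains $L_\infty^+\setminus\{0\}$, because $P\sim P_0$ forces $E_P(Y)>0$ for $Y\geq 0$ not $P_0$-null, and it is disjoint from $L-L_\infty^+$, because $E_P(X-Y)=-E_P(Y)\leq 0$. Conversely, given $U$ as in \eqref{bazg78j}, Hahn--Banach separates the open convex set $U$ from the convex set $L-L_\infty^+$ by a norm-continuous $\varphi$ on $L_\infty$ and a constant $\alpha$ with $\varphi>\alpha$ on $U$ and $\varphi\leq\alpha$ on $L-L_\infty^+$. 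From $0\in L-L_\infty^+$ one reads $\alpha\geq 0$; from $L$ being a subspace, $\varphi\equiv 0$ on $L$; from $-\lambda\,L_\infty^+\subset L-L_\infty^+$ for every $\lambda>0$, $\varphi\geq 0$ on $L_\infty^+$; and from $I_A\in U$ whenever $P_0(A)>0$, $\varphi(I_A)>\alpha\geq 0$. Interpreting $\varphi$ as a bounded finitely additive measure on $\mathcal{A}$ absolutely continuous with respect to $P_0$, this measure is non-negative, equivalent to $P_0$, and vanishes on $L$; normalizing gives the desired equivalent martingale f.a.p. Condition \eqref{margr} now falls out immediately: any nonzero $Z\in L_\infty^+$ in the norm closure of $L-L_\infty^+$ would sit in the open set $U$ while being approximated by points of $U^c\supset L-L_\infty^+$.

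For the atomic case, let $C:=\overline{L-L_\infty^+}$ (norm closure), a norm-closed convex cone in $L_\infty$. Its polar in the norm dual consists, as one checks directly, of the non-negative bounded finitely additive measures on $\mathcal{A}$ that are absolutely continuous with respect to $P_0$ and vanish on $L$; in particular, $C^\circ$ is norm-closed. Fix a partition of $\Omega$ into $P_0$-atoms $A_1,A_2,\ldots\,$. By \eqref{margr}, each $I_{A_n}\notin C$, so the bipolar theorem provides $\mu_n\in C^\circ$ with $\mu_n(A_n)>0$; rescale so that $\mu_n(\Omega)=1$. The series $\mu:=\sum_n 2^{-n}\mu_n$ converges in total variation and hence lies in $C^\circ$. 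For any $A\in\mathcal{A}$ with $P_0(A)>0$, atomicity forces $A\supset A_n$ modulo $P_0$-null sets for some $n$, so $\mu(A)\geq\mu(A_n)\geq 2^{-n}\mu_n(A_n)>0$. Thus $\mu\sim P_0$, and its normalization is an equivalent martingale f.a.p.

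The main obstacle is this last step. The bipolar theorem, together with \eqref{margr}, yields a separating measure for each individual set $A$ of positive $P_0$-measure, but a priori a different measure for each $A$. To amalgamate them into one measure equivalent to $P_0$ one needs a countable exhaustion of the $P_0$-positive sets by sets already separated, and atomicity provides exactly this via the countable partition into atoms. Without such countability, the convex combination trick collapses, which is in line with \eqref{margr} being insufficient in general.
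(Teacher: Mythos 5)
Your proof is correct and follows essentially the same route as the paper: the same set $U=\{Z:E_P(Z)>0\}$ for necessity, the same Hahn--Banach separation of $U$ from $L-L_\infty^+$ for sufficiency, and in the atomic case the same device of producing one separating measure per atom and forming $\sum_n 2^{-n}\mu_n$. Your bipolar-theorem phrasing is just a repackaging of the paper's direct separation of $I_{A_n}$ from the norm-closed convex set $\overline{L-L_\infty^+}$, and your identification of positive normalized functionals on $L_\infty$ with f.a.p.'s $\ll P_0$ plays the role of the paper's Lemma 2.
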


\begin{proof} Let $L_\infty$ be equipped with the norm-topology and
\begin{equation*}
V=L-L_\infty^+,\quad W=\overline{V}=\overline{L-L_\infty^+},\quad\mathbb{M}=\{P\in\mathbb{P}:P\ll P_0,\,E_P(X)=0\text{ for }X\in L\}.
\end{equation*}

Suppose $P$ is an equivalent martingale f.a.p. and define
\begin{equation*}
U=\{X\in L_\infty:E_P(X)>0\}.
\end{equation*}
Since $E_P(X-Y)=-E_P(Y)\leq 0$ whenever $X\in L$ and $Y\in L_\infty^+$, then $U\cap V=\emptyset$. For each $X\in L_\infty^+$, $X\neq 0$, there is $\epsilon>0$ with $P_0(X\geq\epsilon)>0$, so that
\begin{equation*}
E_P(X)=E_P(X^+)\geq\epsilon\,P(X\geq\epsilon)>0.
\end{equation*}
Hence, $L_\infty^+\subset U\cup\{0\}$. Further, $U$ is open and convex for the map $X\mapsto E_P(X)$ is linear and continuous. Conversely, suppose condition \eqref{bazg78j} holds. Since $V$ is convex, $U$ is open convex and $U\cap V=\emptyset$, there is a linear (continuous) functional $f:L_\infty\rightarrow\mathbb{R}$ such that $f(X)>f(Y)$ for all $X\in U$ and $Y\in V$. Since $f>0$ on $U$ (due to $0\in V$) then $f$ is positive. Since $f(1)>0$ (for $1\in U$) it can be assumed $f(1)=1$. By Lemma \ref{kumon}, $f(X)=E_P(X)$, $X\in L_\infty$, for some $P\in\mathbb{P}$ with $P\ll P_0$. Such a $P$ is an equivalent martingale f.a.p.. In fact, since $L$ is a linear space and $\sup_Lf\leq\sup_Vf<\infty$, then $f=0$ on $L$. Thus, $P\in\mathbb{M}$. And for each $A\in\mathcal{A}$ with $P_0(A)>0$, one obtains $P(A)=f(I_A)>0$ for $I_A\in U$.

Next, under condition \eqref{bazg78j}, $W=\overline{V}\subset U^c$. Hence, it is obvious that \eqref{bazg78j} $\Rightarrow$ \eqref{margr}.

Finally, suppose that \eqref{margr} holds. Fix $A\in\mathcal{A}$ with $P_0(A)>0$. Since $I_A\notin W$, one obtains $f(I_A)>\sup_Wf$ for some linear (continuous) functional $f:L_\infty\rightarrow\mathbb{R}$. Given $X\in L_\infty^+$, since $-n\,X\in W$ for all $n\geq 1$, it follows that
\begin{equation*}
\sup_n\,[-n\,f(X)]=\sup_nf(-n\,X)\leq\sup_Wf<\infty.
\end{equation*}
Hence $f(X)\geq 0$, i.e., $f$ is positive. Since $f(1)>0$ (otherwise, $f$ is identically null) it can be assumed $f(1)=1$. Again, by Lemma \ref{kumon}, $f(X)=E_P(X)$, $X\in L_\infty$, for some $P\in\mathbb{P}$ with $P\ll P_0$. Since $0\in W$, then $P(A)=f(I_A)>0$. Since $L$ is a linear space and $\sup_Lf\leq\sup_Wf<\infty$, then $E_P(X)=0$ for all $X\in L$. Summarizing, for each $A\in\mathcal{A}$ with $P_0(A)>0$, there is $P_A\in\mathbb{M}$ such that $P_A(A)>0$. If $P_0$ is atomic, as we now assume, there is a partition $A_1,A_2,\ldots$ of $\Omega$ such that $A_n\in\mathcal{A}$, $P_0(A_n)>0$ and $P_0(\cdot\mid A_n)$ is 0-1 valued for all $n$. Define $P=\sum_{n=1}^\infty 2^{-n}P_{A_n}$. Then, $P\in\mathbb{M}$. For each $A\in\mathcal{A}$ with $P_0(A)>0$, one obtains $P_0(A\cap A_n)>0$ and $P_0(A^c\cap A_n)=0$ for some $n$. On noting that $P_{A_n}\ll P_0$,
\begin{equation*}
2^nP(A)\geq P_{A_n}(A\cap A_n)=P_{A_n}(A_n)-P_{A_n}(A^c\cap A_n)=P_{A_n}(A_n)>0.
\end{equation*}
Therefore, $P$ is an equivalent martingale f.a.p..
\end{proof}

For general $P_0$, we do not know whether condition \eqref{margr} suffices for the existence of equivalent martingale f.a.p.'s. Another open problem is whether condition \eqref{newxcond} implies the existence of equivalent martingale measures.

Finally, we note that condition \eqref{margr} looks like the {\em no free lunch with vanishing risk} condition of \cite{DS94}. Actually, the only (but basic) difference pertains the form of $L$. In \cite{DS94}, $L$ is a certain class of stochastic integrals (in a fixed time interval and driven by a fixed semi-martingale) while in this paper $L$ is any subspace of $L_\infty$. We refer to \cite{KAB} for a concise and elegant proof of the results in \cite{DS94}.

\end{document}